 \makeatletter \@addtoreset{equation}{section}
\newtheorem{theorem}{Theorem}[section]
{\rm}
{\rm}
\newtheorem{remark}[theorem]{Remark}{\rm}
\newcounter{parentenv}
\newcommand{\R}{{\mathbb R}}
\newcommand{\N}{{\mathbb N}}
\newcommand{\s}{{\bm s}}
\begin{document}

\title[Differential systems with "maxima".]{Existence results for some  classes  of  differential systems with "maxima" }
\author[E. Mangino, E. Pascali]{Elisabetta  Mangino, Eduardo Pascali}
\date{}
\address{ Dipartimento di Matematica e Fisica ``Ennio De Giorgi'', Universit\`a del Salento, Via per Arnesano, I-73100 LECCE, Italy}
\email{elisabetta.mangino@unisalento.it}
\email{eduardo.pascali@unisalento.it}

\keywords{systems of differential equations with "maxima"; initial value problems; local existence of solutions}
\subjclass[2020]{34K07}

\begin{abstract}
Local existence properties of initial boundary value problems associated with a new type of systems of differential equations with ``maxima"  are investigated. \end{abstract}

\maketitle

\begin{center}
\emph{ \small dedicato al Prof. Antonio Avantaggiati per il suo novantesimo compleanno}
\end{center}

\section{Introduction}
In this short note we consider a class of functional differential equations (and systems) that can be used to describe complex evolutionary  phenomena in which the future behaviour  depends not only on the present state but also  on the past history. The model problem is an initial value problem (IVP)  associated with   a modified logistic equation which contains the maximum of the square of the unknown function over a past interval: 
\begin{equation}\label{P1}
\begin{cases}\dot{x}(t)= x(t)- \max_{[0,t]} x^2(s) \quad t\geq 0;\\
  x(0)= x_0\end{cases}
\end{equation}
where $x_0\in\R$.

As it is emphasized in the book \cite{DAN}, the application of the classical logistic equation in the setting of experimental sciences entails two order of difficulties: on one hand the necessity of experimentally setting some of the parameters appearing in the equation, and on the other hand the fact that the derivative changes sign exactly when  a certain value of the function is reached. 
To tackle with the second problem, often an  apriori set delay $\tau$ is considered in the equation. It is evident that there are situations in which neither the delay nor the parameters can be determined on an experimental base. The problem \eqref{P1} seems to be more appropriate to deal with those cases.

Analizing \eqref{P1},  it is obvious that, if $x_0=0$  (resp. $x_0=1$),  then  the constant function $x\equiv 0$ (resp. $x\equiv1$)  is a solution.
Moreover, if $x\in C^1([0,T])$ is a solution of \eqref{P1}, we observe that:
\begin{itemize}
\item if $x_0 <0$ or $x_0>1$,  then $\dot{x}(0)<0$. Therefore,  in a neighbourhood of $0$, $\dot{x}(t)<0$  and  the equation reduces to $\dot{x}(t) = x(t) - x_0^2.$
\item if $0< x_0 <1,$ then $\dot{x}(0)>0.$ Therefore,   in a neighbourhood of $0$, $\dot{x}(t)>0$ and the equation reduces  to the well know equation $\dot{x}(t) = x(t) -  x^2(t).$
\end{itemize} 


These easy considerations show that the problem \eqref{P1}    somehow "contains" two different types of problems, on the basis of the initial value.

Moreover the IVP \eqref{P1} features also the following  strange behaviour.
Let $t_0 >0$ and assume that $0< x_1 <1$: then a solution of the following IVP 
\begin{equation}\label{PP1}
\dot{x}(t)= x(t)- \max_{[0,t]} x^2(s) \quad t_0 \leq t; \quad x(t_0)= x_1
\end{equation}
could be an extension of a solution either of the IVP
\begin{equation}\label{PPP1}
\dot{x}(t)= x(t)- \max_{[0,t]} x^2(s) \quad 0 \leq t; \quad x(0)= y_0
\end{equation}
or of the IVP
\begin{equation}\label{PPPP1}
\dot{x}(t)= x(t)- \max_{[0,t]} x^2(s) \quad 0 \leq t; \quad x(0)= z_0
\end{equation}
for suitable $0<y_0<1, \quad 1<z_0.$
This "uncertainty" situation for a solution $x=x(t)$ could appear  at all time $t>0$ for which $0< x(t) <1.$

More generally, we are going to consider the system
\begin{equation}\label{P2}\begin{cases} \dot{x}(t)=\displaystyle{f\Big(t, x(t), \max_{s\in [0,t]}g_1(x_1(s)),  \dots, \max_{s\in [0,t]}g_m(x_m(s))\Big)}, \qquad t\geq 0\\
x(0)=x_0\end{cases}
\end{equation}
where $x_0\in\R^m$, 
 $f\in C([0,+\infty[ \times \R^{2m}, \R^{m})$ and is locally Lipschitz with respect  to the second variable and the functions  $g_i\in C(\R)$ are  locally Lipschitz  on $\R$, for every $i=1, \dots, m$.

This type of systems  belongs to the class of  systems of differential equations with "maxima". We refer to the monograph \cite{BH} for a survey of motivations and techniques on the subject.  In particular,  Section 3.3 of \cite{BH} is devoted to the study of IVP associated with scalar differential equations of  the type
\begin{equation}\label{P3} \begin{cases} \dot{x}(t)=f(t, x(t), \max_{s\in [0,t]} x(s)) \qquad t\geq 0\\
x(0)=x_0\end{cases}
\end{equation}
Clearly, even in the scalar case, the class of problems \eqref{P2} is wider than \eqref{P3}. 

Our aim is to provide first, via fixed point theory, a local esistence result for the general system \eqref{P2}.
Afterwards, in particular situations as \eqref{P1}  and for systems of the type 
$$
\begin{cases}
\dot{x}(t)= x(t)- \max_{s\in [0,t]} y(s)\\
\dot{y}(t)= y(t)- \max_{s\in [0,t]} x(s)\end{cases} \qquad \mbox{or}\qquad 
\begin{cases}
\dot{x}(t)= x(t)- \max_{s\in [0,t]} y^2(s)\\
\dot{y}(t)= y(t)- \max_{s\in [0,t]} x^2(s)\end{cases},
$$
we will provide more precise existence results by the use of Peano-Picard's approximation.

\section{Local existence results via contraction theorem}

We start with two remarks that will help along the proofs of our results.

\begin{remark}{\rm Let $g, h\in C([a, b])$. Then
\[ |\max_{[a,b]} g - \max_{[a,b]} h| \leq \max_{[a,b]}|g-h|.\]
Indeed, assume that  $\max_{[a,b]} g \geq  \max_{[a,b]} h$ and let $x_0\in [a,b]$ such that $\max_{[a,b]}g=g(x_0)$. Then, 
\[ |\max_{[a,b]} g - \max_{[a,b]} h| = g(x_0) - \max_{[a,b]} h \leq g(x_0)-h(x_0)= |g(x_0)-h(x_0)| \leq \max_{[a,b]}|h-g|.\]}
\end{remark}

\begin{remark} {\rm Let $g\in C([a,b])$. Then the function 
$$h(s)=\max_{\tau\in [0,s]} g(\tau), \quad s\in [a,b]$$
is continuous. Indeed let $s_0\in [a,b]$. Fix $\varepsilon>0$ and consider $\delta>0$ such that $|g(\tau)-g(s)|>\varepsilon$ if $|\tau-s|<\delta$. 
For any $s_0<s< s_0+\delta$, it can happen that $h(s)=h(s_0)$ or that $h(s)=g(\overline\tau)$ for some $\overline\tau\in[s_0,s]$. In the first case obviously $h(s)-h(s_0)<\varepsilon$, while in the second case
\[ |h(s)-h(s_0)|=h(s)-h(s_0) \leq g(\overline\tau) -g(\s_0)<\varepsilon.\]
Therefore $\lim_{s\to s_0^+}h(s)=h(s_0)$. 

If $s_0-\delta<s<s_0$, then $h(s_0)=h(s)$ or $h(s_0)=g(\overline\tau)$ for some $\overline\tau\in[s,s_0]$.
In the last case,
\[ |h(s)-h(s_0)|=h(s_0)-h(s) \leq g(\overline\tau) -g(s)<\varepsilon.\]
So we get that $\lim_{s\to s_0^-}h(s)=h(s_0)$.
} \end{remark}

\begin{theorem}   
Let  $x_0\in\R^m$, 
 $f\in C([0,+\infty[ \times \R^{2m}, \R^{m})$ and  locally Lipschitz with respect  to the second variable and  $g_i\in C(\R)$ locally Lipschitz  on $\R$, for every $i=1, \dots, m$.

Given  $\alpha>0$ and $T>0$, 
set  
\[ M_{\alpha, T}:=\max \left\{ ||f(t,u,v)||\, \mid\, t\in [0,T], u\in [x_0-\alpha, x_0+\alpha]^m, v\in \prod_{i=1}^m g_i([x_0-\alpha, x_0+\alpha])\right\}\]
and assume $M_{\alpha,T}>0$.
Let $L_{\alpha, T}>0$ and $L_\alpha>0$  be such that for every $t\in [0,T]$, $u_1, u_2 \in [x_0-\alpha, x_0+\alpha]^m$, $v_1, v_2\in \prod_{i=1}^m g_i([x_0-\alpha, x_0+\alpha])$ 
and for every $x,y \in [x_0-\alpha, x_0+\alpha]$
\begin{align*} &||f(t, u_1, v_1)-f(t, u_2, v_2)||\leq L_{\alpha, T}(||u_1-v_1||+||u_2-v_2||)\\
&|g_i(x)-g_i(y)|\leq L_{\alpha} |x-y|.
\end{align*}
Then, for every 
$$0<\overline T<\min\left\{ \frac{\alpha}{M_{\alpha, T}}, \frac{1}{L_{\alpha, T}(1+L_\alpha\sqrt{m)}}, T\right\}$$
there exists $x\in C^1([0, \overline T]; \R^m)$ solution of the IVP \eqref{P2}.
\end{theorem}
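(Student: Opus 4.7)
The plan is to recast \eqref{P2} as a fixed point problem for the Volterra-type operator
\[ (\Phi x)(t) := x_0 + \int_0^t f\!\left(s, x(s), \max_{[0,s]}g_1(x_1), \dots, \max_{[0,s]}g_m(x_m)\right) ds \]
on the closed ball
\[ X := \{ x \in C([0,\overline T]; \R^m) : \|x(t)-x_0\|\leq\alpha \text{ for every } t \in [0,\overline T]\}, \]
endowed with the sup-norm, which is a complete metric space. Remark 2.2 applied to each $g_i\circ x_i$, together with the continuity of $f$, guarantees $\Phi x\in C^1([0,\overline T];\R^m)$ whenever $x\in X$; moreover, by the fundamental theorem of calculus, fixed points of $\Phi$ are precisely the $C^1$ solutions of \eqref{P2}. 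Hence it suffices to verify the two hypotheses of Banach's contraction theorem.

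Invariance $\Phi(X)\subset X$ is easy: for $x\in X$ one has $x(s)\in [x_0-\alpha,x_0+\alpha]^m$ for every $s$, and the maximum of $g_i\circ x_i$ over $[0,s]$ is attained, hence belongs to $g_i([x_{0,i}-\alpha,x_{0,i}+\alpha])$; the integrand is therefore bounded by $M_{\alpha,T}$ and the condition $\overline T<\alpha/M_{\alpha,T}$ yields $\|(\Phi x)(t)-x_0\|\leq\overline T\,M_{\alpha,T}\leq\alpha$. For the contraction, the Lipschitz bound on $f$ (read with the obvious correction $\|u_1-u_2\|+\|v_1-v_2\|$) and Remark 2.1 applied coordinate by coordinate give
\[ \bigl|\max_{[0,s]}g_i(x_i)-\max_{[0,s]}g_i(y_i)\bigr|\leq L_\alpha\max_{[0,s]}|x_i(\tau)-y_i(\tau)|\leq L_\alpha\|x-y\|_\infty; \]
assembling these $m$ coordinatewise bounds into the Euclidean norm on $\R^m$ produces the factor $L_\alpha\sqrt m$, whence
\[ \|\Phi x-\Phi y\|_\infty\leq \overline T\, L_{\alpha,T}(1+L_\alpha\sqrt m)\|x-y\|_\infty. \]
The condition $\overline T<1/(L_{\alpha,T}(1+L_\alpha\sqrt m))$ then makes $\Phi$ a strict contraction, and Banach's theorem supplies the unique fixed point in $X$, i.e.\ the sought solution.

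The only mildly delicate step is the contraction estimate: one must propagate the $\max$ operator through the Lipschitz constants componentwise (this is precisely what Remark 2.1 is for) and then pass from coordinatewise bounds to the Euclidean norm on $\R^m$, which is where the factor $\sqrt m$ in the admissible upper bound for $\overline T$ originates. Everything else is a direct application of the assumptions and of Remarks 2.1--2.2.
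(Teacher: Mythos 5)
Your proposal is correct and follows essentially the same route as the paper: recast \eqref{P2} as the integral equation, work on the closed ball $X$ with the sup-norm, use $\overline T M_{\alpha,T}\leq\alpha$ for invariance and Remark 2.1 together with the Lipschitz constants to obtain the contraction factor $\overline T L_{\alpha,T}(1+\sqrt m L_\alpha)$, then apply Banach's fixed point theorem. You also rightly read the Lipschitz hypothesis on $f$ as $\|u_1-u_2\|+\|v_1-v_2\|$, correcting an evident typo in the statement.
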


\begin{proof} We will apply the Banach Fixed Point Theorem. 

Indeed, observe first that the existence of a $C^1$ solution of problem \eqref{P2} is equivalent to the existence of a continuous solution of the integral problem
\begin{equation}\label{intp}
x(t)=x_0+ \int_0^t \displaystyle{f\Big(s, x(s), \max_{\tau\in [0,s]}g_1(x_1(\tau)),  \dots, \max_{\tau\in [0,s]}g_m(x_m(\tau))\Big)} ds.\end{equation}
Fix 
$$0<\overline T<\min\left\{ \frac{\alpha}{M_{\alpha, T}}, \frac{1}{L_{\alpha, T}L_\alpha\sqrt{m}}, T\right\}$$ 
and 
consider the map $F:C([0, \overline T]; \R^m)\rightarrow C([0, \overline T]; \R^m)$ defined by 
$$F(x)(t)=x_0+ \int_0^t \displaystyle{f\Big(s, x(s), \max_{\tau\in [0,s]}g_1(x_1(\tau)),  \dots, \max_{\tau\in [0,s]}g_m(x_m(\tau))\Big)} ds$$
and the ball
\[ X:=\Big\{ x\in C([0, \overline T]; \R^m)\, \mid\, ||x(t)-x_0||\leq \alpha\ \ \forall t \in [0, \overline T]\Big\}.\]
Clearly $X$ is a complete metric space, with respect the the distance induced by the norm of $C([0, \overline T]; \R^m)$:
$$||x||_\infty:= \sup_{t\in[0,\overline T]} ||x(t)||, \qquad x\in C([0, \overline T]; \R^m).$$
If $x\in X$, then
\begin{align*}& ||F(x)-x_0||_\infty \leq \\ 
\leq & \sup_{0\leq t\leq \overline T} \int_0^t \left\Vert \displaystyle{f\Big(s, x(s), \max_{\tau\in [0,s]}g_1(x_1(\tau)),  \dots, \max_{\tau\in [0,s]}g_m(x_m(\tau))\Big)} \right\Vert ds\\
\leq & \overline T M_{\alpha, T} \leq \alpha.
\end{align*}
Hence $F(X)\subseteq X$. On the other hand, for every $x, y\in X$, it holds
\begin{align*}
&||F(x)- F(y)||_\infty \leq\\
&\leq  \overline T L_{\alpha, T} (||x-y||_\infty +\\
&+ \max_{s\in [0,\overline T]}||\Big(\max_{\tau\in [0,s]}g_1(x_1(\tau)),  \dots, \max_{\tau\in [0,s]}g_m(x_m(\tau))\Big) - \Big(\max_{\tau\in [0,s]}g_1(y_1(\tau)),  \dots, \max_{\tau\in [0,s]}g_m(y_m(\tau))\Big)||\leq\\
&\leq  \overline T L_{\alpha, T} \Big(||x-y||_\infty  + \sqrt{m}\max_{s\in [0,\overline T]}\max_{i=1}^m  | \max_{\tau\in [0,s]}g_i(x_i(\tau))) - \max_{\tau\in [0,s]}g_i(y_i(\tau)))|\Big)\leq\\
& \leq \overline T L_{\alpha, T}\Big(||x-y||_\infty+ \sqrt{m} \max_{i=1}^m \max_{\tau\in [0,\overline T]}|g_i(x_i(\tau))- g_i(y_i(\tau))|\Big)\leq\\
& \leq \overline T L_{\alpha,T} (1+\sqrt m L_\alpha)||x-y||_\infty.
\end{align*}
Therefore $F$ is a contraction on $X$ and it has a unique fixed point.
\end{proof}

\begin{remark}
{\rm The previous result applies, for example, to the following types of problems
$$
\dot{x}(t)= \alpha(t)x(t)- \beta(t)\max_{s\in [0,t]} x^2(s) \quad 0 \leq t; \quad x(0)= x_0;
$$
$$
\dot{x}(t)= \alpha(t) x(t)- \beta(t) \max_{s\in [0,t]} x(s) \quad 0 \leq t; \quad x(0)= x_0;
$$
$$
\dot{x}(t)= \alpha(t) x(t)- \beta(t) \max_{s\in [0,t]} |x(s)| \quad 0 \leq t; \quad x(0)= x_0.
$$
under  suitable conditions on the function $\alpha, \beta$.}
\end{remark}

\section{Existence proofs with approximations}

\begin{theorem}\label{T1}
Consider the following problem
\begin{equation}\label{eq:P2}
\begin{cases}
\dot{x}(t)= x(t)- \max_{s\in [0,t]} x^2(s) \quad  t\geq 0; \\
 x(0)= x_0\end{cases}
\end{equation}
with $x_0\not=1$. Let $\alpha>1$ and 
\[0<T^* <  \frac{\alpha -1}{\alpha (1+\alpha|x_0|)}.\]
Then there  exists a solution  $x\in C^1([0,T^*])$ of \eqref{eq:P2}.\end{theorem}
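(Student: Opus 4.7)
I would carry out a Picard-type successive approximation scheme combined with an Ascoli--Arzel\`a compactness argument, rather than a contraction argument, since the latter would give a strictly smaller admissible time range. On $[0, T^*]$ set $y_0(t) \equiv x_0$ and define recursively
\[y_{n+1}(t) := x_0 + \int_0^t \left( y_n(s) - \max_{\tau \in [0,s]} y_n^2(\tau) \right) ds.\]
The goal is to extract a subsequence $y_{n_k}$ converging uniformly on $[0, T^*]$ to some $x$ that then satisfies the integral reformulation of \eqref{eq:P2}; together with Remark 2.2 and continuity of the integrand, this $x$ will belong to $C^1([0, T^*])$ and solve the IVP.

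\textbf{Uniform bound and equicontinuity.} The key step is to prove by induction that $\|y_n\|_\infty \leq B$ on $[0, T^*]$, where $B := \alpha |x_0|$. The base case uses $\alpha > 1$. For the inductive step, assuming $\|y_n\|_\infty \leq B$, one has
\[|y_{n+1}(t)| \leq |x_0| + \int_0^t \left( |y_n(s)| + \max_{\tau \in [0,s]} y_n^2(\tau) \right) ds \leq |x_0| + T^* B (1 + B),\]
and the precise hypothesis on $T^*$ is exactly equivalent to
\[|x_0| + T^*\, \alpha|x_0|(1 + \alpha|x_0|) \leq \alpha|x_0| = B,\]
which closes the induction. (If $x_0 = 0$ the iterates are identically zero and $x \equiv 0$ solves \eqref{eq:P2} trivially, handling the degenerate case.) The same estimate yields $|\dot y_{n+1}(t)| \leq B(1+B)$, so $\{y_n\}$ is uniformly bounded and uniformly Lipschitz, hence equicontinuous.

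\textbf{Passage to the limit and main obstacle.} Ascoli--Arzel\`a produces a subsequence $y_{n_k} \to x$ uniformly on $[0, T^*]$. Applying Remark 2.1 pointwise in $s$,
\[\left| \max_{[0,s]} y_{n_k}^2 - \max_{[0,s]} x^2 \right| \leq \max_{[0,s]} |y_{n_k}^2 - x^2| \leq 2B\, \|y_{n_k} - x\|_\infty \longrightarrow 0\]
uniformly in $s \in [0, T^*]$, so the integrands in the defining recursion converge uniformly and we may pass to the limit to conclude that $x$ satisfies the integral equation, hence \eqref{eq:P2}. The only delicate point is the calibration in the second paragraph: one must choose the radius $B = \alpha|x_0|$ so that the hypothesis on $T^*$ is precisely the value that closes the bootstrap; any Lipschitz-based radius choice (as in the contraction theorem of Section~2) would yield a worse constant.
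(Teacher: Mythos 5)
Your a priori bound is correct and is essentially the paper's: the paper normalizes $x_n=x_0g_n$ and shows $|g_n|\le\alpha$ on $[0,T^*]$ by induction, which is exactly your bound $\|y_n\|_\infty\le B=\alpha|x_0|$, and your verification that $|x_0|+T^*\,\alpha|x_0|(1+\alpha|x_0|)\le\alpha|x_0|$ follows from the hypothesis on $T^*$ matches the paper's computation. The problem is the passage to the limit. Arzel\`a--Ascoli gives you a subsequence $y_{n_k}\to x$ uniformly, but the recursion relates $y_{n_k+1}$ to $y_{n_k}$, and $\{n_k+1\}$ is in general not a subsequence of $\{n_k\}$. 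So while the right-hand side
\[
x_0+\int_0^t\Bigl(y_{n_k}(s)-\max_{\tau\in[0,s]}y_{n_k}^2(\tau)\Bigr)\,ds
\]
does converge to $x_0+\int_0^t\bigl(x(s)-\max_{[0,s]}x^2\bigr)\,ds$, the left-hand side is $y_{n_k+1}$, about whose limit you know nothing. This is the classical pitfall of applying compactness to Picard iterates: subsequential limits of the iterates need not be fixed points. As written, the proof does not close.

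The repair is exactly the step you chose to skip, and your stated reason for skipping it rests on a misconception: proving a Cauchy estimate for the iterates does \emph{not} require the integral operator to be a contraction, hence does not shrink the admissible time range. On the ball $|u|\le B$ the map $u\mapsto u-u^2$ is Lipschitz with constant $1+2B$, so using Remark 2.1 one gets by induction
\[
\|y_{n+1}-y_n\|_{\infty,[0,t]}\le C\,\frac{\bigl((1+2B)t\bigr)^{n+1}}{(n+1)!},
\]
whose sum converges for \emph{every} $t\le T^*$ because the factorial dominates; the only constraint on $T^*$ is the one needed for the invariance of the ball, i.e.\ your calibration. This is precisely what the paper does (its estimate $|g_{n+1}-g_n|\le \frac{|1-x_0|}{1+2\alpha|x_0|}\frac{(1+2\alpha|x_0|)^{n+1}t^{n+1}}{(n+1)!}$), and it yields uniform convergence of the whole sequence, after which your limit argument is fine. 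Alternatively, if you insist on a pure compactness route, you should apply Schauder's fixed point theorem to the integral operator on the ball $\|y\|_\infty\le B$ rather than to the sequence of iterates.
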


\begin{proof} We prove the existence of a solution via Peano-Picard's approximations.
Set $x_0 (t) = x_0$  for every $t\geq 0$ and define
$$
x_n(t) = x_0 + \int_0^t x_{n-1}(s)ds -\int_0^t \max_{[0,s]}x_{n-1}^2(\eta) d\s \quad t\in [0,T^*], \qquad n\geq 1
$$
It immediate  to prove that 
\begin{equation}\label{EQ1}
x_n(t) = x_0 g_n(t)\quad \forall n \in N, t\geq 0
\end{equation}
where $g_0\equiv 1$ and 
\[ g_n(t)= [1+\int_0^t g_{n-1}(s)ds -x_0\int_0^t \max_{[0,s]}[g_{n-1}(\eta)]^2ds].\]

By induction, using the choice of $T^*$,  we easily get that 
\[ \forall n\in\N, t\in [0,T^*]\ \ \ |g_n(t)|\leq \alpha,\]
and, as a consequence, that 
\[ |g_{n+1}(t)-g_n(t)|\leq \frac{|1-x_0|}{1+2\alpha|x_0|} \frac{(1+2\alpha|x_0|)^{n+1}t^{n+1}}{(n+1)!}.\]
Then the sequence $(g_n)_n$ is uniformly convergent on $[0, T^*]$ and therefore also the sequence $(x_n)_n$ is uniformly convergent om $[0,\overline T]$. It is immediate that its uniform limit is a solution of the problem \eqref{eq:P2}.
\end{proof}

\begin{remark}{\rm It is worth noticing that  
\[ T^*<\max_{\alpha\geq 1}\frac{\alpha -1}{\alpha (1+\alpha|x_0|)}.\]}
\end{remark}

\bigskip

We consider  now the following system
\begin{equation}\label{eq:P3}
\begin{cases}
\dot{x}(t)= x(t)- \max_{s\in [0,t]} y(s)
\\
\dot{y}(t)= y(t)- \max_{s\in [0,t]} x(s)
\\
x(0)=x_0 \quad y(0)=y_0\end{cases}
\end{equation}
with $x_0, y_0\in\R$. 
We remark that \eqref{eq:P3} is equivalent to the functional system 
\begin{equation}\label{eq:P4}\begin{cases}
x(t) = x_0 +\int_0^t x(s)ds-\int_0^t \max_{[0,s]}y(\tau)ds,
\\
y(t) = y_0 +\int_0^t y(s)ds-\int_0^t \max_{[0,s]}x(\tau)ds,\end{cases}
\end{equation}
The following theorem holds.
\begin{theorem}\label{T2}
Assume that $x_0 >0$, $y_0>0$ and $x_0\neq y_0$.
Then  for all $T>0$ there exists a solution  $(x(t), y(t))\in C^1([0,T])^2$ of the system \eqref{eq:P3}.
\end{theorem}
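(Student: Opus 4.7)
The plan is to follow the Peano--Picard iteration scheme used in Theorem~\ref{T1}. By the symmetry $(x,y)\leftrightarrow (y,x)$ of the system, I may assume $x_0>y_0>0$. Starting from the constant functions $x_0(t)\equiv x_0$, $y_0(t)\equiv y_0$, I define recursively
\[
x_n(t)=x_0+\int_0^t x_{n-1}(s)\,ds-\int_0^t \max_{[0,s]} y_{n-1}(\tau)\,ds,
\]
\[
y_n(t)=y_0+\int_0^t y_{n-1}(s)\,ds-\int_0^t \max_{[0,s]} x_{n-1}(\tau)\,ds.
\]

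The key step is an induction showing that on $[0,T]$ every $x_n$ is non-decreasing with $x_n(t)\geq x_0$, and every $y_n$ is non-increasing with $y_n(t)\leq y_0$. Granted the inductive hypothesis at stage $n-1$, the max terms collapse to $\max_{[0,s]} y_{n-1}(\tau)=y_0$ and $\max_{[0,s]} x_{n-1}(\tau)=x_{n-1}(s)$, whence
\[
\dot x_n(t)=x_{n-1}(t)-y_0\geq x_0-y_0>0, \qquad \dot y_n(t)=y_{n-1}(t)-x_{n-1}(t)\leq y_0-x_0<0,
\]
which closes the induction and yields the bounds.

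With this simplification, the $x_n$--recurrence decouples from $y_n$: it is precisely the classical Picard iteration for the linear IVP $\dot x=x-y_0$, $x(0)=x_0$, and hence converges uniformly on $[0,T]$ to $\bar x(t)=y_0+(x_0-y_0)e^t$. Substituting into the reduced $y_n$--recurrence
\[
y_n(t)=y_0+\int_0^t \bigl[y_{n-1}(s)-x_{n-1}(s)\bigr]\,ds
\]
and using the uniform convergence $x_n\to \bar x$, a standard Picard argument shows $(y_n)$ converges uniformly on $[0,T]$ to the solution $\bar y$ of $\dot y=y-\bar x(t)$, $y(0)=y_0$, namely $\bar y(t)=y_0-(x_0-y_0)\,t\,e^t$.

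Finally, I would verify that $(\bar x,\bar y)$ is a $C^1$ solution of \eqref{eq:P3} on $[0,T]$: since $\bar x$ is strictly increasing and $\bar y$ is strictly decreasing with $\bar y(t)\leq y_0<x_0\leq \bar x(t)$ for all $t\in[0,T]$, the maxima in the equivalent integral system \eqref{eq:P4} reduce exactly as they did for the iterates. The main obstacle is the monotonicity induction, which is what allows the nonlocal $\max$ operators to be replaced by local expressions; once that step is in place, the rest is just standard Picard iteration for a linear ODE, which converges on any compact interval $[0,T]$, giving the conclusion for arbitrary $T>0$.
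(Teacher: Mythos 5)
Your proof is correct and follows essentially the same route as the paper: a Peano--Picard iteration combined with a monotonicity induction ($x_n$ non-decreasing, $y_n$ non-increasing) that collapses $\max_{[0,s]}y_{n-1}$ to $y_0$ and $\max_{[0,s]}x_{n-1}$ to $x_{n-1}(s)$, after which standard Picard convergence applies on any $[0,T]$. The only difference is that you additionally identify the explicit limits $\bar x(t)=y_0+(x_0-y_0)e^t$ and $\bar y(t)=y_0-(x_0-y_0)te^t$, which the paper does not do and which gives a cleaner final verification that the maxima reduce for the limit pair as well.
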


\begin{proof} Assume $0< y_0 <x_0$ and consider the sequences of functions $(x_n)$ and $(y_n)$ defined on $[0,+\infty[$ by 
\begin{align*}
&x_0 (t)=x_0 \quad y_0(t)=y_0\\
&x_{n+1}(t) = x_0 +\int_0^t (x_n(s)-y_0)ds\\
&y_{n+1}(t) = y_0 +\int_0^t (y_n(s)-x_n(s))ds
\end{align*}

It holds that, for every $n\in\N$ and for every $t\geq 0$, $x_n(t)\geq y_0$ and $y_n(t)\leq x_n(t)$.
Indeed, the assertion is obviously true if $n=0$. Assuming that $x_n(t)\geq y_0$ and $y_n(t)\leq x_n(t)$ for every $t\geq 0$, we get that
\begin{align*} &x_{n+1}(t)-y_0= x_0-y_0 + \int_0^t (x_n(s)-y_0)ds \geq 0, \\
 &x_{n+1}(t)-y_{n+1}(t) =x_0-y_0 + \int_0^t (x_n(s)-y_n(s))ds \leq 0.\end{align*}
As a consequence we get that, for every $n\in\N$, $\dot{x}_n\geq 0$ and $\dot{y}_n\leq 0$ and consequently
\[ \max_{[0,s]}x(\tau)=x(s), \ \  \max_{[0,s]}y_n(\tau)=y_n(0)=y_0.\]
Therefore, for  the sequences $(x_n)$ and $(y_n)$, it holds that 
\begin{align*}
x_{n+1}(t) = x_0 +\int_0^t x_n(s)ds-\int_0^t \max_{[0,s]}y_n(\tau)ds,
\\
y_{n+1}(t) = y_0 +\int_0^t y_n(s)ds-\int_0^t \max_{[0,s]}x_n(\tau)ds,
\end{align*}

By induction, one can prove that   for every $n\in\N$ and every $t\geq 0$
\begin{align*}
 & |x_{n+1}(t)-x_n(t)|\leq  |x_0-y_0| \frac{t^{n+1}}{(n+1)!}\\
&|y_{n+1}(t)-y_n(t)| \leq |x_0-y_0|T \frac{t^{n+1}}{n!}
\end{align*}
Hence the   sequences $(x_n)$ and  $(y_n)$ are uniformly convergent on $[0,T]$ to  continuous functions $x_\infty=x_\infty(t)$ and $y_{\infty}=y_{\infty}(t)$ and 
 the couple  $(x_{\infty},y_{\infty})$  is a solution of the functional system \eqref{eq:P4}.
\end{proof}

\begin{remark}{\rm It is worth observing  that the proof fails if $x_0=y_0.$ Moreover the proof highlights the difference with the system 
$$
\dot{x}(t)= x(t)- y(t)
$$
$$
\dot{y}(t)= y(t)- x(t).
$$}
\end{remark}

\begin{remark}  {\rm More interesting seems to be the study of the following general system
$$\begin{cases}
\dot{x}(t)= a(t)x(t)- b(t)\max_{s\in [0,t]} y(s)\\
\dot{y}(t)= c(t)y(t)- d(t)\max_{s\in [0,t]} x(s)\\
x(0)=x_0 >0, \quad y(0)=y_0>0 \end{cases}
$$
where the functions $a,b,c,d$ are continuous, non negative and defined on the interval $[0,T]$.

If  the functions $a,b,c,d$ are constant, one can prove the following partial results.

If $A=ax_0-by_0 <0 \quad B= cy_0 -dx_0<0$ and $a>0, \quad c>0,$ then a solution is the following couple of functions
$$
x(t)=x_0 +A\frac{1}{a}[e^{at}-1] \quad y(t)=y_0+B\frac{1}{c}[e^{ct}-1].
$$
and therefore more information follow. For example we have that 
\begin{align*}
&x(t)=0 \Leftrightarrow t= \frac{1}{a}\log \frac{by_0}{|A|}
&y(t)=0\Leftrightarrow t=\frac{1}{c}\log \frac{dx_0}{|B|}.
\end{align*}
For different situations, such as $A>0, B<0$, or $A<0,B>0$,  or $A>0,B>0$ an explicit representation for the solution is not available.}\end{remark}

Next we consider the following problem, for $t\geq 0$

\begin{equation}\label{eq:P5}\begin{cases}
\dot{x}(t)= x(t)- \max_{s\in [0,t]} y^2(s)\\
\dot{y}(t)= y(t)- \max_{s\in [0,t]} x^2(s)\\
x(0)=x_0>0, \\
y(0)=y_0>0.\end{cases}\end{equation}

\begin{theorem} If $T, c_0>0$ satisfy
\begin{align*}
&|x_0|+|x_0-y_0^2|T \leq c_0, \quad |y_0|+|y_0-x_0^2|T \leq c_0;\\
&|x_0|+c_0T+c_0^2T \leq c_0, \quad |y_0|+c_0T+c_0^2T \leq c_0.
\end{align*}
 then there exists $(x,y)\in C^1([0,  T]; \R^2)$ solution of the IVP \eqref{eq:P5}
 \end{theorem}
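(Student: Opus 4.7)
The plan is to build Peano--Picard iterates analogous to those used in Theorems \ref{T1} and \ref{T2}, but adapted to the quadratic nonlinearity appearing inside the maxima. Set $x_0(t)\equiv x_0$, $y_0(t)\equiv y_0$, and for $n\geq 0$ define
\[
x_{n+1}(t)=x_0+\int_0^t x_n(s)\,ds-\int_0^t \max_{\tau\in[0,s]} y_n^2(\tau)\,ds,
\]
\[
y_{n+1}(t)=y_0+\int_0^t y_n(s)\,ds-\int_0^t \max_{\tau\in[0,s]} x_n^2(\tau)\,ds.
\]
The goal is to show that these iterates remain bounded by $c_0$ on $[0,T]$ and form uniformly Cauchy sequences; the uniform limit then satisfies the integral form of \eqref{eq:P5}, which is equivalent to the differential IVP.

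First I would prove by induction that $|x_n(t)|\leq c_0$ and $|y_n(t)|\leq c_0$ for every $n\in\N$ and every $t\in[0,T]$. The third and fourth hypotheses force $|x_0|,|y_0|\leq c_0$, giving the case $n=0$. For $n=1$ the direct computation $x_1(t)=x_0+(x_0-y_0^2)t$, together with the first (and, symmetrically, the second) hypothesis, yields $|x_1(t)|\leq|x_0|+|x_0-y_0^2|T\leq c_0$. For the inductive step, using $|x_n|,|y_n|\leq c_0$ and hence $y_n^2\leq c_0^2$, we obtain
\[
|x_{n+1}(t)|\leq|x_0|+c_0 T+c_0^2 T\leq c_0
\]
by the third hypothesis, and analogously for $y_{n+1}$ by the fourth.

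Next I would establish uniform convergence via a Cauchy-type estimate. Combining the inequality of Remark 2.1 with the algebraic identity $y_n^2-y_{n-1}^2=(y_n+y_{n-1})(y_n-y_{n-1})$ and the bound $|y_n+y_{n-1}|\leq 2c_0$ from the previous step, one obtains
\[
\Big|\max_{\tau\in[0,s]} y_n^2(\tau)-\max_{\tau\in[0,s]} y_{n-1}^2(\tau)\Big|\leq 2c_0\max_{\tau\in[0,s]}|y_n(\tau)-y_{n-1}(\tau)|,
\]
and symmetrically for the $x$--component. Setting $\Delta_n(t):=\sup_{s\in[0,t]}\max\{|x_n(s)-x_{n-1}(s)|,|y_n(s)-y_{n-1}(s)|\}$, these estimates plug into both iteration formulas to give
\[
\Delta_{n+1}(t)\leq(1+2c_0)\int_0^t \Delta_n(s)\,ds.
\]
Since $\Delta_1(t)\leq Kt$ with $K=\max(|x_0-y_0^2|,|y_0-x_0^2|)$, a straightforward induction yields $\Delta_{n+1}(t)\leq K(1+2c_0)^n t^{n+1}/(n+1)!$, and summing this exponentially convergent series gives uniform convergence of $(x_n)$ and $(y_n)$ on $[0,T]$.

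The main obstacle is the interplay between the quadratic nonlinearity and the max operator: since $t\mapsto t^2$ is not globally Lipschitz on $\R$, one must first localize using the a priori bound $c_0$ obtained in the first step, and only then apply the max inequality of Remark 2.1 to get a contractive-type estimate. Once both the uniform bound and the Cauchy estimate are in place, dominated convergence passes the limit under the integral sign and inside the maxima (exploiting the continuity of the $s\mapsto\max_{[0,s]} g$ map established in Remark 2.2), so that the uniform limit $(x_\infty,y_\infty)$ satisfies the integral form of \eqref{eq:P5}, hence lies in $C^1([0,T])^2$ and solves the IVP.
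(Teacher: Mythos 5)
Your proposal is correct and follows essentially the same route as the paper: the same Peano--Picard iterates, the same inductive $c_0$-bound using the stated hypotheses, and the same contraction-type estimate obtained from the factorization $y_n^2-y_{n-1}^2=(y_n+y_{n-1})(y_n-y_{n-1})$ together with the max inequality of Remark 2.1, yielding the factorial decay $(1+2c_0)^n t^{n+1}/(n+1)!$. The only cosmetic differences are that you package the two error sequences into a single quantity $\Delta_n$ and use the constant $K=\max(|x_0-y_0^2|,|y_0-x_0^2|)$ where the paper uses $c_0/T$ (equivalent under the first two hypotheses), and you also silently correct the paper's typo in the definition of $y_{n+1}$.
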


\begin{proof} 
The initial problem \eqref{eq:P5} is equivalent to the  following functional system.

\[ \begin{cases}
x(t) =x_0 + \int_0^t x(s)ds - \int_0^t \max_{[0,s]}y^2(s)ds;
\\
y(t) =y_0 + \int_0^t y(s)ds - \int_0^t \max_{[0,s]}x^2(s)ds.
\end{cases}\]

As usual, we  define the  sequences of functions
$(x_n)$ and  $(y_n)$   on $[0,+\infty[$ by:
\begin{align*}
&x_0(t)=x_0, \ \ y_0(t)=y_0\\
&x_{n+1}= x_0 + \int_0^t x_n(s)ds - \int_0^t \max_{[0,s]}y_n^2(s)ds\\
&y_{n+1}= x_0 + \int_0^t x_n(s)ds - \int_0^t \max_{[0,s]}x_n^2(s)ds.\end{align*}

Under the assumptions, it is immediate to prove by induction that 
$$
|x_n(t)| \leq c_0, \quad |y_n(t)| \leq c_0 \quad \forall n \in N, \quad t\geq 0.
$$
Consequently
\begin{align*}
&|x_{n+1}(t) -x_n(t)| \leq \frac{c_0}{T} (1+2c_0)^n \frac{t^{n+1}}{(n+1)!};\\
& |y_{n+1}(t) -y_n(t)| \leq \frac{c_0}{T} (1+2c_0)^n \frac{t^{n+1}}{(n+1)!}.  
\end{align*}
Indeed, the last assertion is immediately true if $n=0$. Assuming it for $n$, we get that 
\begin{align*}
|x_{n+1}(t)-x_n(t)|\leq &\int_0^t |x_n(t)-x_{n-1}(t)|dt + \int_0^t \left\vert \max_{[0,s]}y_n^2(s) - max_{[0,s]}y_{n-1}^2(s)\right\vert ds \leq \\
&\frac{c_0}{T} (1+2c_0)^{n-1} \frac{t^{n+1}}{(n+1)!} + \int_0^t\max_{[0,s]}\left\vert y_n^2-y_{n-1}^2\right\vert ds\leq \\
&\frac{c_0}{T} (1+2c_0)^{n-1} \frac{t^{n+1}}{(n+1)!} +  2c_0 \int_0^t\max_{[0,s]}\left\vert y_n-y_{n-1}\right\vert ds\leq \\
& \frac{c_0}{T} (1+2c_0)^n \frac{t^{n+1}}{(n+1)!}.
\end{align*}

Hence the sequences $(x_n)$ and $(y_n)$ are uniformly convergent to continuous functions $x_\infty,  y_\infty$ defined in the interval $[0,T]$,
that solve the functional system.
\end{proof}



\begin{remark}
{\rm The methods we have considered could also be applied to investigate 
a version of  Lokta-Volterra systems with "maxima", namely
$$
\dot{x}(t)=x(t) - \max_{s\in [0,t]}{x(t)y(t)}; \quad \quad \dot{y}(t)=y(t) + \max_{s\in [0,t]}{x(t)y(t)}. 
$$ 
or other analogous equations and systems. }\end{remark}

\end{document}